\def\be{\begin{equation}}
\def\ee{\end{equation}}
\def\bse{\begin{subequations}}
\def\ese{\end{subequations}}
\newtheorem{thm}{Theorem}
\newtheorem{lem}[thm]{Lemma}
\newdefinition{rmk}{Remark}
\begin{document}

\begin{frontmatter}

\title{Quasilinear Reaction Diffusion Systems with Mass Dissipation}
\author{Evangelos Latos\footnote{Institut f\"ur Mathematik und
    Wissenschaftliches Rechnen, Heinrichstra{\ss}e 36, 8010
    Graz, Austria. Email: \texttt{evangelos.latos@uni-graz.at}. } and
     Takashi Suzuki\footnote{Center for Mathematical Modeling and Data Science, Osaka University. Email: \texttt{suzuki@sigmath.es.osaka-u.ac.jp}}
        }

\begin{abstract}
We study quasilinear reaction diffusion systems relative to the Shigesada-Kawasaki-Teramoto model. Nonlinearity standing for the external force is provided with mass dissipation. Estimate in several norms of the solution is provided under the restriction of diffusion coefficients, growth rate of reaction, and space dimension. 
\end{abstract}

\begin{keyword}
quasilinear reaction diffusion system \sep total mass dissipation \sep global in time classical solution \sep Shigesada-Kawasaki-Teramoto model  
\end{keyword}

\end{frontmatter}


\section{Introduction}

Quasilinear reaction diffusion system is given by 
\begin{eqnarray} 
& & \tau_i\frac{\partial u_i}{\partial t}-\Delta\left(d_i(u)u_i\right) =f_i(u) \quad   
\mbox{in $\Omega\times(0,T)$} \nonumber\\ 
& & \frac{\partial}{\partial\nu}\left(d_i(u)u_i\right)=0 \qquad  \qquad \quad \ \ 
\mbox{on $\partial\Omega\times(0,T)$} \nonumber\\ 
& & \left. u_i\right\vert_{t=0}=u^0_i(x)\geq 0  \qquad \quad \qquad \mbox{in $\Omega$}  
 \label{sktLV}
\end{eqnarray}  
for $1\leq i\leq N$, where $\tau=(\tau_i)\in \mathbb{R}_+^N$, $u= (u_1(x,t), \ldots, u_N(x,t))\in \mathbb{R}^N$, $\Omega\subset{\bf R}^n$ is a bounded domain with smooth boundary $\partial\Omega$, $\nu$ is the outer unit normal vector, and $u_0=(u^0_{i}(x))\not\equiv 0$ is the initial value sufficiently smooth. For the nonlinearity it is assumed that 
\begin{equation} 
d=(d_i(u)):\overline{\mathbb{R}}^N_+\to \overline{\mathbb{R}}_+^N, \quad d_i(u)\geq c_0>0, \ 1\leq i\leq N  
 \label{asd}
\end{equation} 
is smooth, and $f=(f_i(u)): \overline{\mathbb{R}}_+^N\rightarrow \mathbb{R}^N$ is locally Lipschitz continuous  and quasi-positive: 
\begin{equation}
f_i(u_1, \cdots, u_{i-1}, 0, u_{i+1}, \cdots u_N)\geq 0, \quad u=(u_i)\geq 0, \ 1\leq i\leq N. 
 \label{qp}
\end{equation} 
We have, therefore, unique existence of a positive classical solution local in time. Our purpose is to extend this solution global in time.  This question is posed in \cite{J15}  with a positive result. 

Main assumption below is the total mass dissipation 
\begin{equation} 
\sum_i f_i(u)\leq0, \quad u=(u_i)\geq 0, 
 \label{tmd}
\end{equation} 
which implies 
\begin{equation} 
\Vert \tau \cdot u(\cdot,t)\Vert_1\leq \Vert \tau\cdot u_0\Vert_1. 
 \label{tmd2}
\end{equation} 
In the semilinear case when $d_i(u)=d_i>0$ for $1\leq i\leq N$, if $f=(f_i(u))$ is of quadratic growth rate;    
\begin{equation} 
\vert f(u)\vert \leq C(1+\vert u\vert^2), \quad u=(u_i)\geq 0,  
 \label{quad}
\end{equation} 
then $u=(u_i(x,t))\geq 0$ is uniformly bounded and hence global in time,   
\begin{equation} 
T=+\infty, \quad \Vert u(\cdot,t)\Vert_\infty \leq C. 
 \label{ubgt}
\end{equation} 
This result is a direct consequence of (\ref{tmd2}) for $n=1$, and the cases $n=2$ and $n\geq 3$ are proven by \cite{sy15, psy19} and \cite{fmt20, fmt}, respectively. For the quasi-linear case of (\ref{sktLV}), however, several tools of the latter approach require non-trivial modifications, such as regularity interpolation \cite{kan90} or  Souplet's trick \cite{sou18}. Here we examine the validity of the former approach. 

So far, global in time existence of the weak solution has been discussed in details. In \cite{Cj04,Cj06,DGJ97,GGJ03a,GJV03} it is observed that appropriate logarithmic change of variables \eqref{sktLV} can be transformed into a system with a symmetric and positive definite diffusion matrix. In \cite{Cj06}, furthermore, it is shown that  
$$
E'(t)+\mathcal{D}(t)\leq C(1+E(t)), 
$$
where 
$$
E(t)=\sum_i\int_\Omega \tau_iu_i(\log u_i-1) 
$$
and $\mathcal{D}(t)$ stands for the energy dissipation, which induces $u_i\log u_i\in L^
\infty (0,T;L^1(\Omega))$ and $\nabla\sqrt{u_i}\in L^2(\Omega_T)$. This structure is used in  \cite{CDJ18,J15}, to derive existence of the weak solution global in time to (\ref{sktLV}) for an arbitrary number of competing population species, 
\[ d_i(u)=a_{i0}+\sum_ja_{ij}u_j \] 
with non-negative and positive constants $a_{ij}$ for $1\leq i,j \leq N$ and $a_{ij}$ for $1\leq j\leq N$, respectively, under the detailed balance condition 
 \begin{equation} 
\pi_ia_{ij}=\pi_ja_{ji}, \quad 1\leq i, j \leq N 
 \label{balance}
\end{equation} 
for positive constants $\pi_i$, $1\leq i\leq N$. 

The fundametnal assumption used in this approach is 
\begin{equation} 
P=(p_{ij}(u))\geq 0, \quad u=(u_i)\geq 0 
 \label{qpositive}
\end{equation} 
for  
\begin{equation} 
p_{ij}(u)=\left( \frac{\partial d_i}{\partial u_j}+\frac{\partial d_j}{\partial u_i}\right)u_iu_j +(\delta_{ij}d_i(u)u_j+\delta_{ji}d_j(u)u_i),  
 \label{qpositive2}
\end{equation} 
where $c_0$, $\delta$, $C$ are positive constants. This assumption induces a uniform estimate of the solution in $L\log L$ norm.  

\begin{thm} 
Let $d=(d_i(u))$ satisfy (\ref{asd}) and (\ref{qpositive})-(\ref{qpositive2}). Assume that it is bounded above and below by positive constants $\delta$, $C$,  
\begin{equation} 
\delta\leq d(u)\cdot u\leq C, \quad u=(u_i)\geq 0.   
 \label{diffusion2}
\end{equation} 
Let, furthermore, $f=(f_i(u))$ satisfy (\ref{qp})-(\ref{tmd}) and is of quadratic growth rate in the sense that it satisfies (\ref{quad}) and 
\begin{equation} 
\frac{\partial f_i}{\partial u_i}\geq -C(1+\vert u\vert), \quad u=(u_i)\geq 0, \ 1\leq i\leq N.
 \label{12} 
\end{equation}  
Then, it holds that 
\begin{equation} 
\sup_{0\leq t<T}\|u(\cdot,t)\|_{L\log L}
\leq C_T  
 \label{llogl}
\end{equation}
for $u=(u_i(\cdot,t))$.  
 \label{thm3x}
\end{thm}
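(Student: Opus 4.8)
The plan is to derive a differential inequality for the entropy-type functional
$$
E(t)=\sum_i\int_\Omega \tau_i u_i(\log u_i-1)\,dx
$$
and then close it using the mass dissipation bound \eqref{tmd2}. First I would compute $\frac{d}{dt}E(t)$ formally (justified on the local classical solution, with the usual truncation $\log(u_i+\varepsilon)$ if needed to handle the singularity of $\log$ at $0$), using the equation $\tau_i\partial_t u_i=\Delta(d_i(u)u_i)+f_i(u)$. Integrating the diffusion term by parts against $\log u_i$ produces
$$
-\sum_i\int_\Omega \nabla(d_i(u)u_i)\cdot\nabla(\log u_i)\,dx
= -\sum_i\int_\Omega \frac{1}{u_i}\nabla(d_i(u)u_i)\cdot\nabla u_i\,dx,
$$
and expanding $\nabla(d_i(u)u_i)=\sum_j\big(\frac{\partial d_i}{\partial u_j}u_i+\delta_{ij}d_i(u)\big)\nabla u_j$ one sees that the quadratic form in $(\nabla u_1,\dots,\nabla u_N)$ appearing here is governed by the matrix $P=(p_{ij}(u))$ of \eqref{qpositive2}, up to the factor $\frac{1}{u_iu_j}$; the symmetrization is exactly why $p_{ij}$ is written with the symmetric combination $\frac{\partial d_i}{\partial u_j}+\frac{\partial d_j}{\partial u_i}$. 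Hence assumption \eqref{qpositive}, $P\geq 0$, forces this whole diffusion contribution to be $\leq 0$, so it can simply be discarded (or, better, retained to give a dissipation term $\mathcal D(t)\geq 0$ on the left).

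Next I would handle the reaction term $\sum_i\int_\Omega f_i(u)\log u_i\,dx$. The elementary inequality $s\log s\leq s\log r + (s-r)$ together with \eqref{quad} lets one bound $f_i(u)\log u_i$ by something like $C(1+|u|^2)(1+\log_+|u|)$, which is not directly controllable; the cleaner route is to split according to the sign of $\log u_i$. Where $u_i\leq 1$, $f_i(u)\log u_i$ is controlled using quasi-positivity \eqref{qp}: on the set $\{u_i\le 1\}$ one writes $f_i(u)=f_i(u)|_{u_i=0}+u_i\int_0^1\frac{\partial f_i}{\partial u_i}\,ds$, and \eqref{qp} makes the first part nonnegative while $\log u_i\le 0$, and \eqref{12} controls the second. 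Where $u_i\geq 1$, I would use $\log u_i\leq |u|$ and \eqref{quad} to get an integrand of size $O(1+|u|^3)$, which is still too large pointwise — so this is where the real work is.

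The main obstacle is precisely closing the estimate: after discarding the good diffusion term one is left with something like
$$
E'(t)+\mathcal D(t)\leq C\int_\Omega (1+|u|^3)\,dx + C,
$$
and $\int_\Omega|u|^3$ is not bounded by $E(t)$. The resolution uses the retained dissipation: the quasilinear structure together with \eqref{diffusion2}, $\delta\le d(u)\cdot u\le C$, gives $\sum_i|\nabla\sqrt{u_i}|^2$-type control (as recalled in the introduction, $\nabla\sqrt{u_i}\in L^2(\Omega_T)$), hence via Sobolev/Gagliardo–Nirenberg an a priori bound on $\|u_i\|_{L^p}$ for some $p>1$ depending on $n$; interpolating $\|u\|_3$ between $\|u\|_1$ (bounded by \eqref{tmd2}) and the higher-integrability norm produced by $\mathcal D(t)$, one absorbs a fraction of the cubic term into $\mathcal D(t)$ and dominates the remainder by $C(1+E(t))$. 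This is the step that forces the dimensional restriction. Once the inequality $E'(t)\le C(1+E(t))$ is reached, Gronwall yields $E(t)\le C_T$ on $[0,T)$, which is exactly the asserted $L\log L$ bound \eqref{llogl}, since $E(t)$ and $\sum_i\|u_i(\cdot,t)\|_{L\log L}$ are equivalent up to the mass bound \eqref{tmd2}.
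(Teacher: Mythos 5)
Your first step---the entropy identity and the observation that assumption \eqref{qpositive}--\eqref{qpositive2} makes the diffusion contribution to $E'(t)$ nonpositive---is exactly the paper's Lemma~\ref{lem6}. But the rest of the argument has a genuine gap, which you in fact flag yourself: your pointwise bound on the reaction term on $\{u_i\ge 1\}$ is cubic, and your proposed repair does not close. The idea you are missing is that the mass dissipation \eqref{tmd} and the one-sided bound \eqref{12} together give a \emph{quadratic} pointwise bound $\sum_i f_i(u)\log u_i\le C(1+|u|^2)$ (Lemma~\ref{lem5}): writing $u_i=s_i|u|$ with $0<s_i\le 1$ and $|u|\ge 1$, one has $\sum_i f_i(u)\log u_i=\log|u|\sum_i f_i(u)+\sum_i f_i(u)\log s_i\le\sum_i f_i(u)\log s_i$ by \eqref{tmd}, and then the lower bound $f_i(u)\ge -C(1+|u|)u_i$ (obtained from \eqref{qp} and \eqref{12} by the fundamental theorem of calculus in the $i$-th variable, exactly as in your own treatment of the set $\{u_i\le 1\}$) combined with $\log s_i\le 0$ and $-s_i\log s_i\le e^{-1}$ yields $\sum_i f_i(u)\log s_i\le C(1+|u|)|u|$. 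Note that the upper bound \eqref{quad} is never used here; only the one-sided hypotheses enter.

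The second missing ingredient is how $E'(t)\le C(1+\|u(\cdot,t)\|_2^2)$ is then integrated in time. The paper does not use the entropy dissipation for this at all; it uses the duality ($L^2$) estimate of Pierre (Lemma~\ref{lem62}): since $\partial_t(\tau\cdot u)-\Delta(d(u)\cdot u)\le 0$ with Neumann conditions, testing against $\int_0^t d(u)\cdot u$ gives $\int_0^T(\tau\cdot u,d(u)\cdot u)\,dt\le\int_0^T(\tau\cdot u_0,d(u)\cdot u)\,dt$, and the two-sided bound \eqref{diffusion2}---which your proof never invokes, a warning sign---together with \eqref{asd} turns this into $\int_0^T\|u(\cdot,t)\|_2^2\,dt\le C_T$; one then simply integrates the entropy inequality over $[0,t]$. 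Your alternative of absorbing $\int|u|^3$ into $\mathcal D(t)$ cannot work as stated: first, $P\ge 0$ alone does not give a quantitative lower bound $\mathcal D(t)\ge c\sum_i\|\nabla\sqrt{u_i}\|_2^2$; second, even granting it, Gagliardo--Nirenberg in two dimensions bounds $\int u_i^3=\|\sqrt{u_i}\|_6^6$ by $C\|\nabla\sqrt{u_i}\|_2^4\|u_i\|_1$ plus lower-order terms, and the fourth power cannot be absorbed by a dissipation that enters the differential inequality only to the second power. Consistently with the duality route, the theorem carries no dimension restriction, whereas your closing step would impose one.
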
 

\begin{thm} 
Let $d=(d_i(u))$ satisfy (\ref{asd}) and (\ref{qpositive})-(\ref{qpositive2}). Assume that it is
of linear growth rate,    
\begin{equation} 
\delta \vert u\vert^2\leq d(u)\cdot u\leq C(1+\vert u\vert^2), \quad u=(u_i)\geq 0 
 \label{diffusion}
\end{equation} 
with $\delta>0$. Assume, furthermore, the cubic growth rate of $f=(f_i(u))$: 
\begin{equation} 
\vert f_i(u)\vert \leq (1+\vert u\vert^3), \ \frac{\partial f_i}{\partial u_i}\geq -C(1+\vert u\vert^2), \quad u=(u_i)\geq 0. 
 \label{cubic2}
\end{equation} 
Then (\ref{llogl}) holds. 
 \label{cubic}
\end{thm}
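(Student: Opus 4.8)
\emph{Proof sketch.} The argument runs the entropy (``boundedness by entropy'') method in the presence of the reaction term and closes by Gronwall's inequality; it parallels the proof of Theorem~\ref{thm3x}, the point being that the linear growth of the diffusion in (\ref{diffusion}) upgrades the entropy dissipation by one power of $u$ (from $\nabla\sqrt{u_i}$ to $\nabla(u_i^\gamma)$ with $\gamma>1$), which is precisely what is needed to absorb the cubic, rather than quadratic, reaction in (\ref{cubic2}). Throughout we use the running hypotheses (\ref{qp}) and (\ref{tmd}), hence also the mass bound (\ref{tmd2}).

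First, the reduction. The local classical solution is positive in $\Omega\times(0,T)$ by quasi-positivity (\ref{qp}) and the strong maximum principle, so the entropy $E(t)=\sum_i\int_\Omega\tau_iu_i(\log u_i-1)$ is well defined and bounded below by $-|\Omega|\sum_i\tau_i$. Since $|s\log s|\le e^{-1}$ on $[0,1]$ and $\|\tau\cdot u(t)\|_1\le\|\tau\cdot u_0\|_1$ by (\ref{tmd2}), an estimate $E(t)\le C_T$ on $[0,T)$ is equivalent to (\ref{llogl}), so it suffices to bound $E$ from above. Testing the $i$-th equation with $\log u_i$ (after a routine regularization), summing over $i$ and using the Neumann conditions yields the entropy identity
\[
\frac{d}{dt}E(t)+\mathcal D(t)=\mathcal R(t),\qquad
\mathcal D(t)=\frac12\sum_{i,j}\int_\Omega p_{ij}(u)\,\nabla\log u_i\cdot\nabla\log u_j\,dx,\qquad
\mathcal R(t)=\sum_i\int_\Omega f_i(u)\log u_i\,dx ,
\]
where $p_{ij}$ is exactly the matrix (\ref{qpositive2}): the identity $\nabla(d_i(u)u_i)\cdot\nabla\log u_i=\sum_j\bigl(\tfrac{\partial d_i}{\partial u_j}u_i+\delta_{ij}d_i\bigr)\nabla u_i\cdot\nabla u_j$, the substitution $\nabla u_k=u_k\nabla\log u_k$, and symmetrization in $(i,j)$ reproduce $\tfrac12 p_{ij}$. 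By (\ref{qpositive}), $\mathcal D(t)\ge0$; keeping the diagonal block $\delta_{ij}d_i(u)u_i$ of $P$ and using $d_i\ge c_0$ gives, as in Theorem~\ref{thm3x}, $\mathcal D(t)\ge 4c_0\sum_i\int_\Omega|\nabla\sqrt{u_i}|^2$, and exploiting in addition the lower bound $d(u)\cdot u\ge\delta|u|^2$ of (\ref{diffusion}) upgrades this to $\mathcal D(t)\ge c\sum_i\int_\Omega\bigl(|\nabla\sqrt{u_i}|^2+|\nabla(u_i^\gamma)|^2\bigr)$ for some $\gamma>1$ depending on $\delta$. This is the only place the linear growth of $d$ enters.

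It remains to estimate the reaction term. Writing $\mathcal R(t)=\sum_i\int_\Omega(f_i^+-f_i^-)\log u_i$ with $f_i^\pm=\max\{\pm f_i,0\}$ and splitting $\Omega$ at $\{u_i\ge1\}$ and $\{u_i<1\}$: on $\{u_i<1\}$, $f_i^+\log u_i\le0$ is discarded, while for $f_i^-$ the quasi-positivity $f_i(u_{-i},0)\ge0$ together with the one-sided bound $\tfrac{\partial f_i}{\partial u_i}\ge-C(1+|u|^2)$ of (\ref{cubic2}), via $f_i(u)=f_i(u_{-i},0)+\int_0^{u_i}\tfrac{\partial f_i}{\partial u_i}(u_{-i},s)\,ds$, gives $f_i^-(u)\le Cu_i(1+|u|^2)$ there, so $-f_i^-\log u_i\le Cu_i|\log u_i|(1+|u|^2)\le C(1+|u|^2)$. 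On $\{u_i\ge1\}$, $-f_i^-\log u_i\le0$ is discarded and $f_i^+\log u_i\le C(1+|u|^3)\log(1+|u|)\le\varepsilon|u|^{3+\sigma}+C_\varepsilon(1+|u|^2)$ for every small $\sigma>0$. Hence, for all $\varepsilon>0$,
\[
\mathcal R(t)\ \le\ \varepsilon\sum_i\int_\Omega u_i^{\,3+\sigma}\,dx\ +\ C_\varepsilon\sum_i\int_\Omega u_i^{2}\,dx\ +\ C_\varepsilon .
\]
By Gagliardo–Nirenberg together with $\|u_i(t)\|_1\le C$ (from (\ref{tmd2})) and the gradient bounds of the previous step, $\int_\Omega u_i^2$ and $\int_\Omega u_i^{3+\sigma}$ are each bounded by $\eta\,\bigl(\|\nabla\sqrt{u_i}\|_2^2+\|\nabla(u_i^\gamma)\|_2^2\bigr)+C_\eta(1+E(t))$ with $\eta$ arbitrarily small, provided the exponents $3$, $\gamma$, $n$ satisfy the required interpolation constraint — this is the ``restriction of the space dimension'', coupling the cubic growth of $f$, the linear growth of $d$, and $n$, just as in Theorem~\ref{thm3x} the quadratic growth, $\tfrac12$, and $n$ are coupled. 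Choosing $\varepsilon$ and $\eta$ small enough, the right-hand side is absorbed into $\tfrac12\mathcal D(t)$, leaving $\tfrac{d}{dt}E(t)+\tfrac12\mathcal D(t)\le C(1+E(t))$, whence $E(t)\le C_T$ on $[0,T)$ by Gronwall, which is (\ref{llogl}).

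The main obstacle is the dissipation step: squeezing out of (\ref{qpositive})–(\ref{qpositive2}) and the linear lower bound in (\ref{diffusion}) a dissipation strictly stronger than $\|\nabla\sqrt{u_i}\|_2^2$ — namely an $L^2$-bound for $\nabla(u_i^\gamma)$ with $\gamma>1$ — and then checking, in the reaction estimate, that this enhanced dissipation together with only the $L^1$-mass bound suffices to absorb the cubic term through Gagliardo–Nirenberg; this last balance is exactly what pins down the admissible space dimensions.
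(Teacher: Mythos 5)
Your overall strategy diverges from the paper's, and the step on which your whole argument pivots does not follow from the hypotheses. You claim that the entropy dissipation satisfies $\mathcal D(t)\geq c\sum_i\int_\Omega\bigl(|\nabla\sqrt{u_i}|^2+|\nabla(u_i^\gamma)|^2\bigr)$ with some $\gamma>1$, "keeping the diagonal block" of $P$ and invoking the lower bound in (\ref{diffusion}). Neither ingredient delivers this. The assumption (\ref{qpositive}) gives only $P=(p_{ij}(u))\geq 0$, i.e.\ $\mathcal D(t)\geq 0$; positive semidefiniteness of $P$ does not allow you to discard the off-diagonal entries and retain the diagonal as a lower bound, and the diagonal entries themselves are $p_{ii}=2\frac{\partial d_i}{\partial u_i}u_i^2+2d_i(u)u_i$, which need not be bounded below by $2c_0u_i$ since $\partial d_i/\partial u_i$ may be negative. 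Moreover (\ref{diffusion}) is a two-sided bound on the \emph{scalar} $d(u)\cdot u=\sum_id_i(u)u_i$; it carries no coercivity information about the quadratic form $\sum_{ij}p_{ij}\xi_i\xi_j$ and therefore cannot "upgrade" the dissipation to control $\nabla(u_i^\gamma)$, $\gamma>1$. Your subsequent Gagliardo--Nirenberg absorption, which already requires an unspecified dimension restriction that the theorem does not impose, rests entirely on this unavailable coercivity.

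The paper's route is different and avoids the dissipation altogether: Lemma~\ref{lem6} only uses $P\geq 0$ to drop $\mathcal D(t)$ and obtain $\frac{d}{dt}\sum_i\int_\Omega\tau_iu_i(\log u_i-1)\leq\sum_i\int_\Omega f_i(u)\log u_i$; Lemma~\ref{lem5} bounds the right-hand side pointwise by $C(1+|u|^3)$ (your splitting of $f_i^\pm$ reproduces the lower-bound half of that lemma, but your positive part carries an extra $\log(1+|u|)$ because you do not exploit $\sum_if_i\leq 0$ to kill the $\log|u|$ contribution); and, crucially, Lemma~\ref{lem62} is Pierre's duality estimate, which uses the mass-dissipation structure $\partial_t(\tau\cdot u)-\Delta(d(u)\cdot u)\leq 0$ together with \emph{both} inequalities in (\ref{diffusion}) to give $\int_0^T\|u(\cdot,t)\|_3^3\,dt\leq C_T$ in any space dimension. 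Integrating the entropy inequality in time and inserting this space--time $L^3$ bound yields (\ref{llogl}) directly, with no Gagliardo--Nirenberg step, no absorption into $\mathcal D$, and no dimension restriction. This is the role (\ref{diffusion}) actually plays; if you want to salvage your argument you would need to replace your dissipation step by this duality estimate.
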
 

At this stage, the method of \cite{sy15, psy19} ensures $L^q$ estimate of the classical solution under the cost of low space dimension.  We require, however, an additional assumption to execute Moser's iteration \cite{ali79}.    

Letting 
\begin{equation} 
A_{ij}(u)=\frac{\partial d_i}{\partial u_j}u_i+\delta_{ij}d_i(u),
 \label{let}
\end{equation} 
we obtain 
\begin{eqnarray*} 
\frac{\partial}{\partial x_\ell}\left(d_i(u)u_i\right)
& = & \sum_j \frac{\partial d_i}{\partial u_j}\frac{\partial u_j}{\partial x_\ell}u_i+d_i(u)\frac{\partial u_i}{\partial x_\ell} \\ 
& = & \sum_j\left(
\frac{\partial d_i}{\partial u_j}u_i+\delta_{ij}d_i(u)\right)\frac{\partial u_j}{\partial x_\ell}
=\sum_jA_{ij}(u)\frac{\partial u_j}{\partial x_\ell}, 
\end{eqnarray*} 
and therefore, (\ref{sktLV}) is reduced to 
\begin{eqnarray} 
& & \tau_i\frac{\partial u_i}{\partial t}-\nabla\cdot\left(\sum_jA_{ij}(u)\nabla u_j\right) = f_i(u) \quad   \mbox{in $\Omega\times(0,T)$} \nonumber\\ 
& & \sum_jA_{ij}(u)\nabla u_j\cdot\nu=0 \qquad \qquad \qquad \qquad \mbox{on $\partial\Omega\times(0,T)$}. 
 \label{skt1}
\end{eqnarray}
The diffusion matrix $A=(A_{ij}(u))$ is not necesarily symmetric nor positive definite. Our assumption is 
\begin{equation} 
A_\alpha(u)+{}^tA_\alpha(u)\geq \delta I, \quad u=(u_i)>0, \ \alpha>0  
 \label{fundamental}
\end{equation} 
for $A_\alpha(u)=(A_{ij}^\alpha(u))$ and $A_{ij}^\alpha(u)=A_{ij}(u)(u_i/u_j)^{\alpha}$,  where $I$ denotes the unit matrix and $\delta$ is a positive constant. 

\begin{thm} 
If $f=(f_i(u))$ is of quadratic growth satisfying (\ref{quad}) and (\ref{12}). Suppose (\ref{llogl}) for the solution. Then, (\ref{fundamental}) implies 
\begin{equation} 
\sup_{0\leq t<T}\Vert u(\cdot, t)\Vert_q\leq C_T(q) 
 \label{lq}
\end{equation} 
for any $1\leq q<\infty$.   
 \label{td}
\end{thm}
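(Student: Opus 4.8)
\section*{Proof plan for Theorem \ref{td}}

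The plan is to run the energy/duality iteration of \cite{sy15,psy19}: starting from the $L\log L$ bound \eqref{llogl}, raise the integrability exponent in finitely many steps until the prescribed $q$ is reached. At each step I fix $p>2$ and $t<T$, multiply the $i$-th equation of \eqref{skt1} by $u_i^{p-1}$, integrate over $\Omega$, sum over $i$, and use that the no-flux condition kills the boundary term, obtaining
\[
\sum_i\frac{\tau_i}{p}\,\frac{d}{dt}\int_\Omega u_i^p+(p-1)\sum_{i,j}\int_\Omega A_{ij}(u)\,u_i^{p-2}\,\nabla u_i\cdot\nabla u_j=\sum_i\int_\Omega f_i(u)\,u_i^{p-1}.
\]
Since $u$ is a positive classical solution, $u_i^{p-1}$ is an admissible test function (otherwise truncate and pass to the limit); the left-hand side is comparable to $\tfrac{d}{dt}\|u\|_p^p$ plus the diffusion term.

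The diffusion term is where \eqref{fundamental} enters, and it is the only place it is used. Setting $v_i=u_i^{p/2}$ one has $u_i^{p-2}\nabla u_i\cdot\nabla u_j=\tfrac{4}{p^2}(u_i/u_j)^{p/2-1}\nabla v_i\cdot\nabla v_j$, so the diffusion term equals $\tfrac{4(p-1)}{p^2}\sum_{i,j}\int_\Omega A_{ij}^{\alpha}(u)\,\nabla v_i\cdot\nabla v_j$ with $\alpha=\tfrac p2-1>0$; since $A_\alpha+{}^tA_\alpha\ge\delta I$, this is bounded below by $c_{p,\delta}\,\|\nabla u^{p/2}\|_2^2$. (Note the constraint $\alpha>0$, i.e.\ $p>2$, is forced here.)

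For the reaction term I combine quasi-positivity \eqref{qp}, the one-sided diagonal Jacobian bound \eqref{12}, and total mass dissipation \eqref{tmd}. Integrating \eqref{12} along the $u_i$-axis and using \eqref{qp} at $u_i=0$ gives the pointwise bound $f_i(u)\ge -C\,u_i(1+|u|)$. Letting $i^{*}$ realise $\max_i u_i$ and using $\sum_i f_i(u)\le 0$,
\[
\sum_i f_i(u)\,u_i^{p-1}\le\sum_i f_i(u)\big(u_i^{p-1}-u_{i^{*}}^{p-1}\big);
\]
since $u_i^{p-1}-u_{i^{*}}^{p-1}\le 0$, inserting the lower bound for $f_i$ yields $\sum_i f_i(u)u_i^{p-1}\le C(1+|u|^{p+1})$ pointwise, hence $\sum_i\int_\Omega f_i(u)u_i^{p-1}\le C\big(1+\|u\|_{p+1}^{p+1}\big)$ — the quadratic growth \eqref{quad} being exactly what keeps the exponent at $p+1$.

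It then remains to close $\tfrac{d}{dt}\|u\|_p^p+c_{p,\delta}\|\nabla u^{p/2}\|_2^2\le C(1+\|u\|_{p+1}^{p+1})$. Writing $\|u\|_{p+1}^{p+1}=\|u^{p/2}\|_{L^{2(p+1)/p}}^{2(p+1)/p}$ and applying the Gagliardo--Nirenberg inequality to interpolate $\|u^{p/2}\|_{L^{2(p+1)/p}}$ between $\|\nabla u^{p/2}\|_{L^2}$ and a lower norm $\|u^{p/2}\|_{L^s}$, with $s$ chosen so that $sp/2$ stays below the exponent controlled at the previous step (base case $L\log L\subset L^1$, supplemented if needed by the space–time gradient integrability that accompanies the derivation of \eqref{llogl}), Young's inequality absorbs a fraction of $\|\nabla u^{p/2}\|_2^2$, leaving $\tfrac{d}{dt}\|u\|_p^p\le C_T$; integration gives $\sup_{0\le t<T}\|u\|_p\le C_T(p)$, and a finite iteration reaches any $q<\infty$. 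The hard part will be precisely this last step: the superlinear reaction term forces the Gagliardo--Nirenberg exponent $\theta$ to stay $<p/(p+1)$, which is admissible only for $p$ not too small relative to $n$; getting the very first improvement above $p=2$ is the delicate point, and this is where the restriction on the space dimension must enter.
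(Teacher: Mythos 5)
Your overall strategy coincides with the paper's: test with powers of $u_i$, use the substitution $v_i=u_i^{p/2}$ to turn the diffusion term into the quadratic form of $A_\alpha$ and invoke (\ref{fundamental}) for coercivity, bound the reaction by $C(1+\Vert u\Vert_{p+1}^{p+1})$, and close by Gagliardo--Nirenberg plus iteration. (Your $i^{*}$-manipulation of the reaction term is superfluous here, since (\ref{quad}) alone already gives $\sum_i f_i(u)u_i^{p-1}\le C(1+\vert u\vert^{p+1})$ pointwise; it is harmless.) The problem is that the one step you explicitly defer --- ``getting the very first improvement above $p=2$'' --- is exactly the content of the theorem, and your sketch does not contain the idea that makes it work. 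At the base step the two-dimensional Gagliardo--Nirenberg/Wirtinger inequality gives
\begin{equation*}
\Vert u\Vert_3^3\leq C\,\Vert u\Vert_{H^1}^{2}\,\Vert u\Vert_1 ,
\end{equation*}
in which the gradient appears to the \emph{critical} power $2$: Young's inequality cannot absorb this into $c\Vert\nabla u\Vert_2^2$ no matter how the remaining factors are split, unless the prefactor $\Vert u\Vert_1$ is itself small. Mere boundedness of $\Vert u\Vert_1$ (your ``$L\log L\subset L^1$'') is not enough. The paper obtains the required smallness from the interpolation $\Vert u\Vert_1\leq\varepsilon\Vert u\Vert_{L\log L}+C_\varepsilon$ combined with the hypothesis (\ref{llogl}); this is the only place the $L\log L$ bound is used, and it is the reason that hypothesis appears in the statement at all. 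Your proposed fallback --- ``the space--time gradient integrability that accompanies the derivation of (\ref{llogl})'' --- is not available either, since Theorem \ref{td} assumes (\ref{llogl}) as such and not the hypotheses (\ref{diffusion2}) or (\ref{diffusion}) under which Lemma \ref{lem62} supplies $\int_0^T\Vert u\Vert_2^2\,dt\leq C_T$.

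Two smaller points. First, once the base case yields $\sup_t\Vert u\Vert_2\leq C_T$, the higher steps are indeed routine because the gradient exponent $\tfrac{p+3}{p+1}$ drops strictly below $2$ and absorption needs no smallness; your description of that part is consistent with the paper's (\ref{pbd}). Second, you should state explicitly that the whole argument is confined to $n=2$: the form of the Gagliardo--Nirenberg inequality used, $\Vert z\Vert_r^r\leq C\Vert z\Vert_q^q\Vert z\Vert_{H^1}^{r-q}$, is specific to two space dimensions, and the iteration does not start in higher dimension.
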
 

The Shigesada-Kawasaki-Teramoto (SKT) model \cite{SKT} describes separation of existence areas of competing species. There, it is assumed that $N=2$, 
\begin{align}
d_1(u)&=a_{10}+a_{11}u_1+a_{12}u_2	\nonumber\\
d_2(u)&=a_{20}+a_{21}u_1+a_{22}u_2, 
 \label{cd}
\end{align}
and 
\begin{align}
f_1(u)&=(a_1-b_1u_1-c_1u_2)u_1 
\nonumber\\
f_{2}(u)&=(a_{2}-b_2u_1-c_2u_2)u_2
 \label{clv}
\end{align}
where $a_{ij}$, $a_i$, $b_i$, $c_i$ are non-negative constants for $i,j=1,2$ and $a_{10}$, $a_{20}$ are positive constants. 

Equalities (\ref{cd}) in SKT model are due to cross diffusion where the transient probability of particle is subject to the state of the target point \cite{okubo, ss11}, while equalities (\ref{clv}) are Lotka-Volterra terms describing competition of two species in the case of 
\begin{equation} 
a_2c_1>a_1c_2, \quad a_1b_2>a_2b_1. 
 \label{a3}
\end{equation} 
The Lotka-Volterra reaction-diffusion model without cross diffusion is the semilinear case, where $d_i(u)=d_i$, $i=1,2$, are positive constants as $a_{ij}=b_{ij}=0$ in (\ref{cd}). For this system, any stable stationary solution is spatially homogeneous if $\Omega$ is convex \cite{kw85}, while  there is (non-convex) $\Omega$ which admits spatially inhomogeneous stable stationary solution \cite{mm83}.  Coming back to the SKT model, we have several results for structure of stationary solutions to a shadow system \cite{LNi, lny04, lny15, msy18}. There is also existence of the solution to the nonstationary SKT model global in time and bounded in $H^2$ norm if 
\begin{equation} 
64a_{11}a_{22}\geq a_{12}a_{21}   
 \label{yagi} 
\end{equation} 
(\cite{yag08}). Obivously, Theorems \ref{thm3x} and \ref{cubic} are not applicable to this system without total mass dissipation (\ref{tmd}).  Such $f=(f_i(u))$, admitting linear growth term in (\ref{tmd}), is called quasi-mass dissipative.  Global in time existence of the solution without uniform boundedness is the question for the general case of quasi-mass dissipation.  

We have the following theorem valid to such reaction under 
\begin{equation} 
A_\alpha(u)+{}^tA_\alpha(u)\geq 0, \quad u=(u_i)\geq 0, \ \alpha>0.  
 \label{fundamental2}
\end{equation} 

\begin{thm} 
Let $d=(d_i(u))$ satisfy (\ref{fundamental2}), and assume (\ref{qp}) and 
\begin{equation} 
f_i(u)\leq C(1+u_i), \quad u=(u_i)\geq 0, \ 1\leq i\leq N 
 \label{linear}
\end{equation} 
for $f=(f_i(u)$. Then, it holds that $T=+\infty$ for any space dimension $n$. 
 \label{thm2}
\end{thm}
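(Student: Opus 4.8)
The plan is to derive, for the local positive classical solution $u=(u_i)\ge 0$ supplied by the standing hypotheses, an a priori bound in $L^q(\Omega)$ for every $q<\infty$ on each finite interval $[0,T]$, upgrade it to an $L^\infty$ bound, and then conclude by the continuation criterion of the local theory. Fix $q>2$ and set $\alpha=(q-2)/2>0$. Testing the $i$-th equation of (\ref{skt1}) against the nonnegative function $u_i^{q-1}$, integrating over $\Omega$, using the no-flux boundary condition to kill the boundary term, and summing over $i$ gives
\begin{equation*}
\frac1q\frac{d}{dt}\sum_i\tau_i\int_\Omega u_i^q=-(q-1)\sum_{i,j}\int_\Omega u_i^{q-2}\nabla u_i\cdot A_{ij}(u)\nabla u_j+\sum_i\int_\Omega u_i^{q-1}f_i(u).
\end{equation*}
The key computation is the rewriting of the diffusion term: since $A_{ij}(u)=A_{ij}^\alpha(u)(u_j/u_i)^\alpha$ and $q-2-\alpha=\alpha$, its integrand equals $(u_i^\alpha\nabla u_i)\cdot A_{ij}^\alpha(u)(u_j^\alpha\nabla u_j)=(\alpha+1)^{-2}\nabla u_i^{\alpha+1}\cdot A_{ij}^\alpha(u)\nabla u_j^{\alpha+1}$, so the diffusion term equals $-\tfrac{q-1}{(\alpha+1)^2}\sum_{i,j}\int_\Omega\nabla u_i^{\alpha+1}\cdot A_{ij}^\alpha(u)\nabla u_j^{\alpha+1}\le 0$ by the semidefiniteness (\ref{fundamental2}) applied componentwise to the $\mathbb R^n$-valued vectors $(\nabla u_i^{\alpha+1})_{i}$. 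We discard it.

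For the reaction term, the one-sided linear bound (\ref{linear}) together with $u_i^{q-1}\ge 0$ and a Young inequality yields $u_i^{q-1}f_i(u)\le C(u_i^{q-1}+u_i^q)\le C(1+2u_i^q)$. Hence $y(t):=\sum_i\tau_i\|u_i(\cdot,t)\|_q^q$ satisfies $y'\le a\,q\,y+B\,q$ with $a,B>0$ depending only on $C,N,\tau,|\Omega|$ and, crucially, \emph{not on $q$}. Gronwall's lemma gives $y(t)\le e^{aqt}(y(0)+B/a)$, and since $y(0)\le\tau_{\max}N|\Omega|\,\|u_0\|_\infty^q$ we obtain
\begin{equation*}
\|u_i(\cdot,t)\|_q\le e^{at}\bigl(c_1^{1/q}\|u_0\|_\infty+c_2^{1/q}\bigr)
\end{equation*}
with $c_1,c_2>0$ independent of $q$. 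The right-hand side stays bounded in $q$ and tends to $e^{at}(\|u_0\|_\infty+1)$ as $q\to\infty$; since $\|u_i(\cdot,t)\|_\infty=\lim_{q\to\infty}\|u_i(\cdot,t)\|_q$ on the finite-measure space $\Omega$, we conclude
\begin{equation*}
\sup_{0\le t\le T}\|u(\cdot,t)\|_\infty\le e^{aT}\bigl(\|u_0\|_\infty+1\bigr)<\infty\qquad\text{for every }T<\infty.
\end{equation*}
By the continuation criterion of the local classical theory this excludes finite-time blow-up, so $T=+\infty$. Note that no restriction on $n$ entered, because the diffusion term was used only through its sign.

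The step I expect to be the main obstacle in a fully rigorous treatment is precisely the passage from the $q$-indexed family of $L^q$ estimates to continuation: one must (i) justify the energy identity for the classical solution, which is routine, and (ii) control the $q$-dependence of the Gronwall constants so that the bound survives $q\to\infty$. Point (ii) is exactly where the \emph{linear} growth of $f$ in (\ref{linear}) is essential: it makes the reaction contribute a term of order $q\,y$ rather than a genuinely superlinear-in-$y$ term, so the differential inequality closes for each $q$ with an exponential rate that is the same for all $q$. This is in sharp contrast with the quadratic and cubic settings of Theorems \ref{thm3x}--\ref{cubic}, where one cannot discard the diffusion dissipation and must invoke Sobolev or Gagliardo--Nirenberg inequalities, thereby reintroducing a dependence on the space dimension. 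One should also keep in mind that positivity of the local solution (a consequence of quasi-positivity (\ref{qp})) is what makes $u_i^{q-1}$ an admissible, sign-definite test function, and that the standing assumption (\ref{asd}) remains in force.
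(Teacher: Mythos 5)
Your proposal is correct and follows essentially the same route as the paper's proof: test with a power of $u_i$, rewrite the diffusion term as the quadratic form of $A_\alpha$ with $\alpha=(q-2)/2$ (the paper's $A_{(p-1)/2}$ with $p=q-1$), discard it by (\ref{fundamental2}), apply Gronwall with a $q$-uniform rate coming from the linear bound (\ref{linear}), and let $q\to\infty$ to obtain the exponentially growing $L^\infty$ bound that rules out finite-time blow-up. The only differences are notational, plus your more explicit bookkeeping of the $q$-independence of the constants.
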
 

Concluding this section, we examine the condition posed in above theorems, for $d=(d_i(u))$ given by  (\ref{cd}). First, for (\ref{qpositive})-(\ref{qpositive2}), we confirm 
\begin{eqnarray*} 
& & p_{11}=2a_{10}u_1+2(2a_{11}u_1^2+a_{12}u_1u_2) \\ 
& & p_{12}=p_{21}=(a_{12}+a_{21})u_1u_2 \\ 
& & p_{22}=2a_{20}u_2+2(a_{21}u_1u_2+2a_{22}u_2^2). 
\end{eqnarray*} 
Then (\ref{qpositive2}) reads  
\[ (a_{12}+a_{21})^2u_1^2u_2^2 \geq 16(2a_{11}u_1^2+a_{12}u_1u_2)(a_{21}u_1u_2+2a_{22}u_2^2), \] 
or equivalently, 
\begin{eqnarray} 
& & \{ (a_{12}+a_{21})^2-16(a_{12}a_{21}+4a_{11}a_{22})\}u_1^2u_2^2 \nonumber\\ 
& & \quad \geq 32 (a_{11}a_{21}u_1^3u_2+a_{22}a_{12}u_1u_2^3), \quad u=(u_1, u_2)\geq 0. 
 \label{22r}
\end{eqnarray} 
Inequality (\ref{22r}) means 
\[ \{ (a_{12}+a_{21})^2-16(a_{12}a_{21}+4a_{11}a_{22})\}\geq 32(a_{11}a_{12}X+a_{22}a_{11}X^{-1}), \quad X>0 \] 
and therefore, 
\begin{equation} 
a_{11}a_{21}=a_{22}a_{12}=0, \quad (a_{12}+a_{21})^2\geq 16(a_{12}a_{21}+4a_{11}a_{22}) 
 \label{23r}
\end{equation} 
is the condition of (\ref{cd}) for (\ref{qpositive})-(\ref{qpositive2}). 

For (\ref{fundamental}), second, we note  
\begin{eqnarray} 
& & A_{11}=a_{10}+2a_{11}u_1+a_{12}u_2 \nonumber\\ 
& & A_{12}=a_{12}u_1, \ A_{21}=a_{21}u_2 \nonumber\\ 
& & A_{22}=a_{20}+a_{21}u_1+2a_{22}u_2, 
 \label{sktp}  
\end{eqnarray} 
to confirm 
\[ A_\alpha(u)=A_\alpha^0(u)+A_\alpha^1(u) \] 
for  
$A^0_\alpha(u)=\mbox{diag}(a_{10}u_1, a_{20}u_2)$ and  
\[ A^1_\alpha(u)=\left( \begin{array}{cc} 
2a_{11}u_1+a_{12}u_2 & a_{12}u_1(u_1/u_2)^\alpha \\ 
a_{21}u_2(u_2/u_1)^\alpha & a_{21}u_1+2a_{22}u_2 \end{array} \right). \] 
Hence (\ref{fundamental}) follows from $A_\alpha^1+{}^tA_\alpha^1\geq 0$, or 
\begin{eqnarray*} 
& & (a_{10}+2a_{11}u_1+a_{12}u_2)(a_{20}+a_{21}u_1+2a_{22}u_2) \\ 
& & \quad \geq \{ a_{12}u_1(u_1/u_2)^\alpha+a_{21}u_2(u_2/u_2)^\alpha\}^2, 
\end{eqnarray*}  
which is reduced to 
\[ 
(2a_{11}X+a_{12})(a_{21}X+2a_{22}) \geq \{ a_{12}X^{1+\alpha}+a_{21}X^{-\alpha}\}^2,  \quad X>0. 
\] 
This condition is thus satisfied if  
\begin{equation} 
a_{12}=a_{21}=0. 
 \label{25r}
\end{equation} 
Finally, condition (\ref{diffusion}) holds if 
\begin{equation} 
4a_{11}a_{22}\geq (a_{12}+a_{21})^2, \quad a_{11}>0, \ a_{22}>0. 
 \label{26r}
\end{equation} 
From (\ref{23r}), particularly (\ref{25r}), cross diffusion is essentially excluded in the application of Theorems \ref{cubic}, \ref{td}, \ref{thm2} to (\ref{cd}). 

\section{Proof of Theorems}\label{sec2}

We begin with the following proof. 

\begin{proof}[Proof of Theorem \ref{thm2}] 
By (\ref{skt1}) we obtain 
\begin{equation} 
\frac{\tau_i}{p+1}\frac{d}{dt}\Vert u_i\Vert_{p+1}^{p+1}+\sum_{\ell, j}\int_\Omega A_{ij}(u)\frac{\partial u_j}{\partial x_\ell}\frac{\partial u_i^p}{\partial x_\ell}=(f_i(u), u_i^{p})  
 \label{24}
\end{equation} 
for $p>0$ and $1\leq i\leq N$, and therefore,  
\begin{eqnarray*} 
\frac{1}{p+1}\frac{d}{dt}\int_\Omega \tau \cdot u^{p+1}+\sum_{ij}\int_\Omega A_{ij}(u)\nabla u_j\cdot \nabla u_i^p & = & \int_\Omega f(u)\cdot u^p \\ 
& \leq & C_1\int_\Omega \tau\cdot u^{p+1} 
\end{eqnarray*}  
by (\ref{linear}). Since 
\[  
A_{ij}(u)\nabla u_j\cdot \nabla u_i^p = \frac{4p}{(p+1)^2}A_{ij}(u)u_j^{-\frac{p-1}{2}}u_i^{\frac{p-1}{2}}\nabla u_j^{\frac{p+1}{2}}\cdot \nabla u_i^{\frac{p+1}{2}}, 
\]  
it holds that  
\begin{equation} 
\sum_{ij}A_{ij}(u)\nabla u_j\cdot \nabla u_i^p=\frac{4p}{(p+1)^2}A_{\frac{p-1}{2}}(u)[\nabla u, \nabla u].  
 \label{25xx}
\end{equation}

By (\ref{fundamental2}) we have   
\[ \frac{1}{p+1}\frac{d}{dt}\int_\Omega \tau\cdot u^p\leq C_2\left( \int_\Omega \tau\cdot u^{p+1}+1\right),  \] 
which implies 
\[ \left( \int_\Omega \tau\cdot u^{p+1}\right)^{\frac{1}{p+1}}\leq e^{C_2t}\left( \int_\Omega \tau\cdot u_0^{p+1}+1\right)^{\frac{1}{p+1}}. \] 
Then we obtain 
\[ \Vert u(\cdot,t)\Vert_\infty\leq C_3e^{C_2t}, \quad 0\leq t<T \] 
by making $p\uparrow +\infty$ with $C_3=C_3(\Vert u_0\Vert_\infty)$, and hence $T=+\infty$. 
\end{proof}

Three lemmas are needed for the proof of the other theorems. 

\begin{lem} 
Assume (\ref{qp}). Then inequality (\ref{12}) implies  
\begin{equation} 
\sum_if_i(u)\log u_i\leq C(1+\vert u\vert^2), \quad u=(u_i)\geq 0.  
 \label{28}
\end{equation} 
The second inequality of (\ref{cubic2}), similarly, implies 
\begin{equation} 
\sum_if_i(u)\log u_i\leq C(1+\vert u\vert^3), \quad u=(u_i)\geq 0.  
 \label{eqn29}
\end{equation} 
 \label{lem5} 
\end{lem}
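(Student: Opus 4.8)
The plan is to reduce both (\ref{28}) and (\ref{eqn29}) to a one-sided pointwise bound on each $f_i$, combined with the (standing) total mass dissipation (\ref{tmd}) used purely as a cancellation device.

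First I would prove a pointwise lower bound on $f_i$. Fix $i$ and the variables $u_j$ for $j\neq i$, and consider the segment $s\mapsto u'(s):=(u_1,\dots,u_{i-1},s,u_{i+1},\dots,u_N)$, $s\in[0,u_i]$. Since $|u'(s)|^2=s^2+\sum_{j\neq i}u_j^2\le|u|^2$ whenever $0\le s\le u_i$, hypothesis (\ref{12}) gives $\partial_s f_i(u'(s))\ge-C(1+|u'(s)|)\ge-C(1+|u|)$ (a.e.\ in $s$); integrating from $0$ to $u_i$ and using $f_i(u'(0))\ge0$ from (\ref{qp}) yields
\[
f_i(u)\ \ge\ -C(1+|u|)\,u_i,\qquad u=(u_i)\ge0,
\]
so that $\max\{0,-f_i(u)\}\le C(1+|u|)\,u_i$. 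The identical argument with (\ref{12}) replaced by the second inequality of (\ref{cubic2}) gives $\max\{0,-f_i(u)\}\le C(1+|u|^2)\,u_i$.

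Next I would set $\overline M:=\max\{1,\max_j u_j\}$, so that $\log\overline M\ge0$, $0<u_i\le\overline M$, and $\overline M\le1+|u|$. Since $\sum_i f_i(u)\le0$ by (\ref{tmd}),
\[
\sum_i f_i(u)\log u_i\ \le\ \sum_i f_i(u)\bigl(\log u_i-\log\overline M\bigr)\ =\ \sum_i f_i(u)\log\frac{u_i}{\overline M}.
\]
Every factor $\log(u_i/\overline M)$ is now $\le0$, so each summand is at most $\max\{0,-f_i(u)\}\,\log(\overline M/u_i)\le C(1+|u|)\,u_i\log(\overline M/u_i)$, and the elementary bound $t\log(\overline M/t)\le\overline M/e$ for $0<t\le\overline M$ controls this by $C(1+|u|)\overline M/e\le C(1+|u|)^2$. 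Summing over $i$ and using $(1+|u|)^2\le C(1+|u|^2)$ yields (\ref{28}); running the same computation with $C(1+|u|)$ replaced by $C(1+|u|^2)$ and with $(1+|u|^2)(1+|u|)\le C(1+|u|^3)$ yields (\ref{eqn29}). It is enough to argue for $u>0$, the range relevant to the positive classical solution.

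The step I expect to be the real obstacle is the contribution of components with $u_i\ge1$, where $\log u_i\ge0$ and the lower bound on $f_i$ controls nothing by itself; it is exactly the cancellation from (\ref{tmd}) — subtraction of the nonnegative quantity $\log\overline M$ — that turns these into terms dominated by the bounded quantity $t\log(\overline M/t)\le\overline M/e$. Checking that the auxiliary segment stays inside $\{\,|u'|\le|u|\,\}$, so that (\ref{12}) integrates with the single clean constant $C(1+|u|)$, is the only other point needing a moment's care.
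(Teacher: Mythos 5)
Your proposal is correct and follows essentially the same route as the paper: the one-sided bound $f_i(u)\geq -C(1+|u|)u_i$ (resp.\ $-C(1+|u|^2)u_i$) obtained by integrating $\partial f_i/\partial u_i$ along the segment where $u_i$ is scaled down to $0$ and invoking quasi-positivity, followed by using $\sum_i f_i\leq 0$ to subtract a common nonnegative logarithm so that the elementary bound $-t\log t\leq e^{-1}$ applies. The only (cosmetic) difference is that you normalize by $\overline M=\max\{1,\max_j u_j\}$ where the paper writes $u_i=s_i|u|$ and normalizes by $|u|$.
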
 

\begin{proof} 
The former part is proven in \cite{sy15}. The latter part follows similarly, which we confirm for completeness.  In fact, given $u=(u_i)\geq 0$, put 
\[ \tilde u_i=(u_1, \cdots, u_{i-1}, 0, u_{i+1}, \cdots, u_N). \] 
It holds that 
\begin{eqnarray} 
f_i(u) & \geq & f_i(u)-f_i(\tilde u_i) \nonumber\\ 
& = & \int_0^1\frac{\partial}{\partial s}f_i(u_1, \cdots, u_{i-1}, su_i, u_{i+1}, \cdots, u_N) \ ds  \nonumber\\ 
& = & \int_0^1\frac{\partial f_i}{\partial u_i}(u_1, \cdots, u_{i-1}, su_i, u_{i+1}, \cdots, u_N) \ ds\cdot u_i \nonumber\\ 
& \geq & -C(1+\vert u(s)\vert^2)u_i \nonumber\\ 
& \geq & -C(1+\vert u\vert^2)u_i
 \label{30}  
\end{eqnarray}  
by (\ref{qp}), where 
\[ u(s)=(u_1, \cdots, u_{i-1}, su_i, u_{i+1}, \cdots, u_N). \] 

We assume $\vert u\vert \geq 1$ because inequality (\ref{28}) is obvious for the other case of  $\vert u\vert\leq 1$. It may be also assumed that $0<s_i\leq 1$ for $u_i=s_i\vert u\vert$. Then we obtain 
\begin{eqnarray*} 
\sum_if_i(u)\log u_i & = & \sum_if_i(u)(\log \vert u\vert + \log s_i) \\ 
& \leq & \sum_if_i(u)\log s_i \\ 
& \leq & - C_4(1+\vert u\vert^2)\sum_iu_i\log s_i 
\end{eqnarray*} 
by $\vert u\vert\geq 1$, (\ref{tmd}), and (\ref{30}). It thus holds that (\ref{eqn29}) for $\vert u\vert \geq 1$ as  
\begin{eqnarray*} 
\sum_if_i(u)\log u_i & \leq  & -C_4(1+\vert u\vert^2)\vert u\vert \sum_is_i\log s_i \\ 
& \leq & C_5(1+\vert u\vert^2)\vert u\vert 
\end{eqnarray*} 
by $0<s_i\leq 1$, $1\leq i\leq N$. 
\end{proof} 

\begin{lem}
If $d=(d_i(u))$ satisfies (\ref{asd}), (\ref{tmd}), and (\ref{diffusion}), then it holds that 
\begin{equation} 
\int_0^T\Vert u(\cdot,t)\Vert_3^3 \ dt \leq C_T. 
 \label{32}
\end{equation} 
If $d=(d_i(u))$ satisfies (\ref{asd}), (\ref{tmd}), and (\ref{diffusion2}), it holds that 
\begin{equation} 
\int_0^T\Vert u(\cdot,t)\Vert_2^2 \ dt \leq C_T. 
 \label{eqn35}
\end{equation} 
 \label{lem62}
\end{lem}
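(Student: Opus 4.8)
The plan is to collapse both estimates onto a single scalar parabolic differential inequality and then to run a duality argument of Pierre type. Summing the $i$-th equation of (\ref{sktLV}) over $i$ and using the total mass dissipation (\ref{tmd}), the functions $v:=\tau\cdot u\geq 0$ and $w:=d(u)\cdot u\geq 0$ satisfy
\[
\partial_t v-\Delta w\leq 0 \ \mbox{ in }\Omega_T,\qquad \frac{\partial w}{\partial\nu}=0 \ \mbox{ on }\partial\Omega\times(0,T),\qquad v|_{t=0}=\tau\cdot u_0,
\]
together with $w=a\,v$ for the effective diffusion coefficient $a:=w/v$. By (\ref{asd}), $a\geq c_0/\max_i\tau_i=:\underline c>0$; and since $d$ is smooth, hence bounded on bounded sets, and $|u|\leq\sum_i u_i\leq v/\min_i\tau_i$, assumption (\ref{diffusion}) forces $a\leq C(1+v)$, i.e. $a$ has at most linear growth in $v$, while (\ref{diffusion2}) forces $a\leq C$.

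For smooth $\theta\geq 0$ let $\varphi$ solve the backward problem $-\partial_t\varphi-a\Delta\varphi=\theta$ with $\partial\varphi/\partial\nu=0$ and $\varphi|_{t=T}=0$; reversing time and applying the maximum principle gives $\varphi\geq 0$. Testing $\partial_t v-\Delta w=\sum_i f_i(u)$ against $\varphi$ and integrating by parts (using the two no-flux conditions) yields the identity
\[
\int_0^T\!\!\int_\Omega v\,\theta \;=\; \int_\Omega\varphi(\cdot,0)\,(\tau\cdot u_0)\,dx\;+\;\int_0^T\!\!\int_\Omega\varphi\sum_i f_i(u) \;\leq\; \Vert\tau\cdot u_0\Vert_\infty\,\Vert\varphi(\cdot,0)\Vert_1,
\]
the last step using $\varphi\geq 0$ and (\ref{tmd}). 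Multiplying the dual equation by $-\Delta\varphi$ and integrating over $\Omega_T$ gives, with only $a\geq\underline c$, the bounds $\Vert\Delta\varphi\Vert_{L^2(\Omega_T)}\leq\underline c^{-1}\Vert\theta\Vert_{L^2(\Omega_T)}$ and $\int_0^T\!\!\int_\Omega a|\Delta\varphi|^2\leq\underline c^{-1}\Vert\theta\Vert_{L^2(\Omega_T)}^2$. Integrating the dual equation first over $\Omega$ and then in $t$, and bounding $\int_\Omega a\Delta\varphi$ by Cauchy--Schwarz through $\int_\Omega a(\cdot,t)\leq C(1+\Vert v(\cdot,t)\Vert_1)\leq C(1+\Vert\tau\cdot u_0\Vert_1)$ --- here is exactly where the linear growth of $a$ and the conserved mass (\ref{tmd2}) are used --- one gets $\Vert\varphi(\cdot,0)\Vert_1\leq C_T\Vert\theta\Vert_{L^2(\Omega_T)}$. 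Hence $\int_0^T\!\!\int_\Omega v\,\theta\leq C_T\Vert\theta\Vert_{L^2(\Omega_T)}$ for every admissible $\theta$, so $\Vert v\Vert_{L^2(\Omega_T)}\leq C_T$. Since $v=\tau\cdot u$ is comparable to $|u|$, this is already (\ref{eqn35}) under (\ref{diffusion2}); and, thanks to the linear-growth bound on $a$, the same $L^2(\Omega_T)$ bound also holds under (\ref{diffusion}).

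To pass from $L^2$ to $L^3$ under (\ref{diffusion}): now $\delta|u|^2\leq w\leq C(1+|u|^2)\leq C(1+v^2)$, so $w\in L^1(\Omega_T)$ by the bound just proved, and $v^3\leq C\,v\,w$. Integrating $\partial_t v\leq\Delta w$ in time gives $-\Delta\bigl(\int_0^t w\,ds\bigr)\leq\tau\cdot u_0$ in $\Omega$ with $\int_\Omega\bigl(\int_0^t w\,ds\bigr)dx\leq C_T$, whence a local elliptic (sub-mean-value) estimate bounds $\int_0^t w\,ds$ in $L^\infty(\Omega_T)$. Interpolating this with $w\in L^1(\Omega_T)$ and feeding the improved integrability of $v$ back through the parabolic inequality --- a bootstrap in the spirit of \cite{fmt20,fmt}, the step at which a restriction on the space dimension may have to enter --- yields $w\in L^{3/2}(\Omega_T)$, that is $\int_0^T\Vert u(\cdot,t)\Vert_3^3\,dt\leq C_T$, which is (\ref{32}).

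The point I expect to be the main obstacle is the unboundedness of the effective coefficient $a=d(u)\cdot u/(\tau\cdot u)$ under (\ref{diffusion}): the classical duality lemma presupposes $a\in L^\infty$, and what rescues the argument is precisely that $a$ grows at most linearly in $v$, so that the conserved total mass controls $\int_\Omega a$ uniformly in $t$; the subsequent bootstrap to the exponent $3$ is the other delicate point.
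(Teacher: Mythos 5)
Your $L^2$ estimate is correct, though it travels a heavier road than necessary: you solve the backward dual problem $-\partial_t\varphi-a\Delta\varphi=\theta$ and control $\Vert\varphi(\cdot,0)\Vert_1$ through the weighted bound $\int\!\!\int a\vert\Delta\varphi\vert^2\leq \underline{c}^{-1}\Vert\theta\Vert_{L^2}^2$ together with the $L^1_x$-bound on $a$ coming from mass dissipation. This is essentially the improved duality estimate and it does deliver $\Vert \tau\cdot u\Vert_{L^2(\Omega_T)}\leq C_T$ under either (\ref{diffusion}) or (\ref{diffusion2}), hence (\ref{eqn35}). The paper instead uses the elementary form of the same duality: integrating $\partial_t(\tau\cdot u)-\Delta(d(u)\cdot u)\leq 0$ in time and testing against $w:=d(u)\cdot u\geq 0$ gives directly
\[
\int_0^T(\tau\cdot u,\,w)\,dt+\tfrac12\tfrac{d}{dt}\bigl\Vert \nabla\textstyle\int_0^t w\bigr\Vert_2^2\leq \int_0^T(\tau\cdot u_0,\,w)\,dt ,
\]
with no dual problem and no hypothesis on the ratio $a=w/(\tau\cdot u)$ at all.

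The genuine gap is in your passage from $L^2$ to the cubic estimate (\ref{32}). You reduce it to $v^3\leq C\,v\,w$ and then invoke an $L^\infty$ bound on $\int_0^t w\,ds$, an unspecified interpolation, and a bootstrap ``in the spirit of'' the quadratic-systems literature, explicitly conceding that a dimension restriction may enter. None of this is carried out, and the lemma holds in every dimension, so the sketched route is both incomplete and aiming at a weaker statement. What you are missing is that the quantity you need, $\int_0^T\!\!\int_\Omega v\,w$, is exactly what the elementary duality identity above controls: since $\nabla\int_0^t w$ vanishes at $t=0$, it yields $\int_0^T(v,w)\,dt\leq\int_0^T(v_0,w)\,dt$. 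Under (\ref{diffusion}) the left side dominates $\delta\min_i\tau_i\int_0^T\Vert u\Vert_3^3\,dt$ because $v\geq \min_i\tau_i\,\vert u\vert$ and $w\geq\delta\vert u\vert^2$, while the right side is at most $C\Vert\tau\cdot u_0\Vert_\infty\bigl(1+\int_0^T\Vert u\Vert_2^2\,dt\bigr)$, which is finite either by your own $L^2$ bound or, without it, by absorbing $\Vert u\Vert_2^2\leq\varepsilon\Vert u\Vert_3^3+C_\varepsilon$ into the left side. That one-line identity replaces your entire bootstrap.
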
 

\begin{proof} 
The latter part is well-known \cite{pie10, suzuki18}.  The former part follows similarly, which we again confirm for completeness. In fact, (\ref{tmd}) implies 
\[ \frac{\partial}{\partial t}\tau\cdot u-\Delta (d(u)\cdot u)\leq 0  \ \mbox{in $\Omega\times (0,T)$}, \quad 
\left. \frac{\partial u}{\partial \nu}\right\vert_{\partial \Omega}=0 \] 
and hence 
\[ (\tau\cdot u,d(u)\cdot u)+\frac12\frac{d}{dt}\left\Vert \nabla\int_0^td(u)\cdot u\right\Vert^2_2\leq (\tau\cdot u_0,d(u)\cdot u),  \] 
where $(\cdot, \cdot)$ denotes the inner product in $L^2(\Omega)$. Then it follows that 
\begin{eqnarray*} 
\delta\min_i\tau_i\cdot \int_0^T\Vert u(\cdot, t)\Vert_3^3 \ dt & \leq & \int_0^T(\tau\cdot u,d(u)\cdot u)\;dt \\ 
& \leq & \int_0^T (\tau\cdot u_0,d(u)\cdot u)\;dt \\ 
& \leq & C\Vert \tau\cdot u_0\Vert_\infty(1+\int_0^T\Vert u(\cdot,t)\Vert_2^2 \ dt)
\end{eqnarray*} 
and hence the result. 
\end{proof} 

The following lemma has been used for construction of weak solution global in time \cite{J15, CDJ18}.   
\begin{lem}
Under the assumption of (\ref{qpositive})-(\ref{qpositive2}) it holds that 
\begin{equation}
\frac{d}{dt}\sum_i\int_\Omega \tau_iu_i(\log u_i-1) \leq \sum_i \int_\Omega f_i(u)\log u_i\;dx. 
 \label{39}
\end{equation} 
 \label{lem6}
\end{lem}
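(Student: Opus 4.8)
\textbf{Proof proposal for Lemma~\ref{lem6}.}

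The plan is to multiply the $i$-th equation in \eqref{skt1} by $\log u_i$, integrate over $\Omega$, and sum over $i$, showing that the diffusion contribution is non-negative precisely because of assumption \eqref{qpositive}--\eqref{qpositive2}. First I would note that $\frac{d}{dt}\int_\Omega \tau_i u_i(\log u_i-1) = \int_\Omega \tau_i \frac{\partial u_i}{\partial t}\log u_i$, so that testing the evolution equation with $\log u_i$ produces exactly the left-hand side of \eqref{39} once the diffusion term is moved to the right and shown to have a favorable sign; the reaction term contributes $\sum_i\int_\Omega f_i(u)\log u_i$ directly, which is what appears on the right of \eqref{39}. The positivity of the classical solution (guaranteed by \eqref{qp} and the local theory) makes $\log u_i$ well defined, though near $t=0$ or near points where some $u_i$ is small one should work with $\log(u_i+\varepsilon)$ and pass to the limit $\varepsilon\downarrow0$ using monotone/dominated convergence together with the $L\log L$ and $\nabla\sqrt{u_i}\in L^2$ bounds.

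The core computation is the diffusion term. Using $\nabla \log u_i = u_i^{-1}\nabla u_i$ we get
\[
\sum_i\int_\Omega \nabla\!\cdot\!\Big(\sum_j A_{ij}(u)\nabla u_j\Big)\log u_i
= -\sum_{i,j}\int_\Omega A_{ij}(u)\,\frac{1}{u_i}\,\nabla u_j\cdot\nabla u_i,
\]
after an integration by parts that kills the boundary term thanks to the no-flux condition in \eqref{skt1}. Substituting $A_{ij}(u)=\frac{\partial d_i}{\partial u_j}u_i+\delta_{ij}d_i(u)$ from \eqref{let}, the integrand becomes $\sum_{i,j}\big(\frac{\partial d_i}{\partial u_j}+\delta_{ij}u_i^{-1}d_i(u)\big)\nabla u_j\cdot\nabla u_i$. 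The idea is to symmetrize this quadratic form in the gradient vectors $\nabla u_1,\dots,\nabla u_N$: writing it as $\sum_{i,j} M_{ij}\,\nabla u_i\cdot\nabla u_j$ with $M_{ij}=\tfrac12\big(\frac{\partial d_i}{\partial u_j}+\frac{\partial d_j}{\partial u_i}\big)+\delta_{ij}u_i^{-1}d_i(u)$, and then observing that $M_{ij} = \dfrac{p_{ij}(u)}{2u_iu_j}$ with $p_{ij}(u)$ exactly as in \eqref{qpositive2}. Hence the diffusion contribution equals $\sum_{i,j}\int_\Omega \frac{p_{ij}(u)}{2u_iu_j}\nabla u_i\cdot\nabla u_j$, which, setting $v_i=\nabla(\log u_i)=u_i^{-1}\nabla u_i$, is $\tfrac12\sum_{i,j}\int_\Omega p_{ij}(u)\,v_i\cdot v_j$; by \eqref{qpositive} the matrix $P=(p_{ij}(u))$ is positive semidefinite, so this is $\geq 0$. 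Moving it to the right-hand side with its sign gives \eqref{39}.

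The main obstacle is the rigorous justification of the formal testing: $\log u_i$ is not an admissible test function without knowing $u_i$ stays bounded away from zero, so one has to run the argument with $\log(u_i+\varepsilon)$, carry the extra $\varepsilon$-terms (which only \emph{help} the sign of the diffusion form, since replacing $u_i$ by $u_i+\varepsilon$ in the denominators and in $\nabla\log(u_i+\varepsilon)$ preserves positive semidefiniteness of the relevant form), and then let $\varepsilon\downarrow0$. Here the a priori control $u_i\log u_i\in L^\infty(0,T;L^1(\Omega))$ and $\nabla\sqrt{u_i}\in L^2(\Omega_T)$ recalled in the introduction provides exactly the integrability needed to pass to the limit in the time-derivative term and to ensure the gradient quadratic form is finite; the reaction term passes to the limit by the local Lipschitz bound on $f$ together with dominated convergence. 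A secondary, purely algebraic point to verify carefully is the identity $M_{ij}=p_{ij}(u)/(2u_iu_j)$, i.e. that the Kronecker-delta terms in \eqref{qpositive2}, namely $\delta_{ij}d_i(u)u_j+\delta_{ji}d_j(u)u_i = 2\delta_{ij}d_i(u)u_i$, reproduce exactly the $\delta_{ij}u_i^{-1}d_i(u)$ diagonal of $M$ after dividing by $2u_iu_j$; this is a direct check but is the linchpin connecting \eqref{let} to \eqref{qpositive2}.
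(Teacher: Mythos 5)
Your proof is correct and is essentially the paper's argument: the paper packages the same computation as the statement that the Onsager matrix $B=A(u)H^{-1}(u)$ with $H=\mathrm{diag}(u_1^{-1},\dots,u_N^{-1})$ satisfies $B+{}^tB=P\geq 0$, which is exactly your identity $M_{ij}=p_{ij}(u)/(2u_iu_j)$ rewritten in the variables $w_i=\log u_i$. The $\varepsilon$-regularization you discuss is not needed here (the paper works with the positive classical solution, so $\log u_i$ is admissible), and your side claim that the $\varepsilon$-perturbed form remains positive semidefinite would actually require $A+{}^tA\geq 0$, which is not among the hypotheses --- so it is fortunate that this step can simply be omitted.
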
 

\begin{proof}
Let 
\begin{equation} 
B=A(u)H^{-1}(u) 
 \label{5}
\end{equation}  
be the Onsager matrix, where $A=(A_{ij}(u))$ and $H(u)=diag\,(u_1^{-1},\ldots,u_N^{-1})$.  
Regard $B=B(w)$ for 
\[ w=(w_i), \quad w_i=\log u_i, \] 
and observe that (\ref{qpositive})-(\ref{qpositive2}) implies 
\begin{equation} 
B(w)+{}^tB(w)\geq 0 
 \label{37}
\end{equation} 
by (\ref{let}). We obtain, furthermore,  
\begin{eqnarray} 
& & \tau_i\frac{\partial u_i}{\partial t}-\nabla\cdot\left(\sum_jB_{ij}(w)\nabla w_j\right) =f_i(u) \quad \mbox{in $\Omega\times(0,T)$} \nonumber\\ 
& & \sum_jB_{ij}(w)\nabla w_j\cdot\nu=0 \qquad \qquad \qquad \qquad \mbox{on $\partial\Omega\times(0,T)$}   
 \label{skt2}
\end{eqnarray} 
for $1\leq i\leq N$ by (\ref{skt1}). 

Put 
\[ \Phi(s)=s(\log s-1).  \] 
Then we obtain 
\begin{eqnarray*}
\frac{d}{dt}\int_\Omega \tau\cdot\Phi(u) 
&=& 
\sum_i\int_\Omega \tau_i\frac{\partial u_i}{\partial t}\log u_i \\ 
& = & \int_\Omega f(u)\cdot w -
\sum_{i,j}B_{ij}(w)\nabla w_j\cdot \nabla w_i \ dx \\
& = & \int_\Omega f(u)\cdot w-B(w)[\nabla w,\nabla w]\;dx \\ 
& \leq & \sum_i \int_\Omega f_i(u)\log u_i\;dx
\end{eqnarray*} 
by (\ref{37}), and hence (\ref{39}).   
\end{proof} 

\begin{proof}[Proof of Theorems \ref{thm3x} and \ref{cubic}] 
These theorems are a direct consequence of Lemmas \ref{lem5}, \ref{lem62}, and \ref{lem6}. 
\end{proof}

\begin{proof}[Proof of Theorem \ref{td}] 
Any $\varepsilon>0$ admits $C_\varepsilon$ such that 
\begin{equation} 
\Vert u\Vert_1\leq \varepsilon \Vert u\Vert_{L\log L}+C_\varepsilon.  
 \label{key}
\end{equation} 
See Chapter 4 of \cite{suzuki05} for the proof. We have, on the other hand, 
\begin{eqnarray} 
\frac{1}{p+1}\frac{d}{dt}
\int_\Omega \tau\cdot u^{p+1}
+\frac{4pc_2}{(p+1)^2}
\Vert \nabla u^{\frac{p+1}{2}}\Vert_2^2 & \leq & \sum_i (f_i(u), u_i^p) \nonumber\\ 
& \leq & C(1+\Vert u\Vert_{p+2}^{p+2} ) 
 \label{scheme}  
\end{eqnarray} 
by (\ref{quad}), (\ref{fundamental}), (\ref{24}), and (\ref{25xx}), where 
\[ \nabla u^{\frac{p+1}{2}}=(\nabla u_i^\frac{p+1}{2}). \] 

Letting  
\[ z=(u_i^{\frac{p+1}{2}}), \quad r=\frac{2}{p+1}\cdot(p+2), \] 
we obtain 
\begin{equation} 
\frac{1}{p+1}\frac{d}{dt}\int_\Omega \tau\cdot u^{p+1}+\frac{c_3}{p+1}\Vert \nabla z\Vert_2^2\leq C(1+\Vert z\Vert_r^r) 
 \label{pite}
\end{equation} 
with $c_3>0$. Apply the Gagliardo-Nirenberg inequality for $n=2$, 
\begin{equation} 
\Vert z\Vert_r^r\leq C(r,q)\Vert z\Vert_q^q\Vert z\Vert_{H^1}^{r-q}, \quad 1\leq q<r<\infty. 
 \label{gni}
\end{equation} 
Here we notice Wirtinger's inequality to deduce 
\begin{eqnarray} 
\Vert u\Vert_{p+2}^{p+2} & = & \Vert z\Vert_r^r\leq C\Vert z\Vert_{H^1}^{r-1}\Vert z\Vert_1 \nonumber\\ 
 & \leq  & C (\Vert \nabla u^{\frac{p+1}{2}}\Vert_2+\Vert u\Vert_{\frac{p+1}{2}}^{\frac{p+1}{2}})^{\frac{p+3}{p+1}}\Vert u\Vert_{\frac{p+1}{2}}^{\frac{p+1}{2}}.  
  \label{iteration}  
\end{eqnarray} 
In this inequality $C$ on the right-hand side is independent of $1\leq p<\infty$, beucase it then follows that $2<r\leq 3$. 
 
For $p=1$ we use (\ref{iteration}) to derive   
\[ \Vert u\Vert_3\leq \varepsilon\Vert \nabla u\Vert_2^2+C_\varepsilon \] 
for any $\varepsilon>0$ by (\ref{key}). Then it follows that 
\begin{equation}
\sup_{0\leq t<T}\Vert u(\cdot,t)\Vert_2\leq C_T. 
 \label{2bd}
\end{equation} 
For $p>1$, second, there arises $\frac{p+3}{p+1}<1$, and hence (\ref{scheme}) and (\ref{iteration}) implies 
\begin{equation} 
\sup_{0\leq t<T}\Vert u(\cdot,t)\Vert_{\frac{p+1}{2}}\leq C_T \ \Rightarrow \ \sup_{0\leq t<T}\Vert u(\cdot,t)\Vert_{p+1}\leq C_T'.  
 \label{pbd}
\end{equation} 
By (\ref{2bd})-(\ref{pbd}) it holds that (\ref{lq}) for any $1\leq q<\infty$. 
\end{proof} 

\begin{rmk}
For system of chemotaxis in two space dimension, inequality (\ref{lq}) for $q=3$ implies uniform boundedness of the chemical term by the elliptic regulariy, which replaces the right-hand side on (\ref{scheme}) by a constant times $1+\Vert u\Vert_{p+1}^{p+1}$. Then Moser's iteration scheme induces (\ref{lq}) for $q=\infty$. See Chapter 11 of \cite{suzuki05} for details. For the case of constant $d_i$ in (\ref{sktLV}), on the other hand, the semigroup estimate is applicable as in \cite{lsy12}. If $n=2$, for example, inequality (\ref{lq}) for $q=2$ implies that for $q=\infty$. Such parabolic estimate to (\ref{sktLV}) will be discussed in future.   
\end{rmk}


\section*{Acknowledgement} 
The first author is supported by the Austrian Science Fund (FWF): F73 SFB LIPID HYDROLYSIS. The second author is supported by JSPS core-to-core research project, Kakenhi 16H06576, and Kakenhi  19H01799.


\begin{thebibliography}{9}

\bibitem{ali79}N.D. Alikakos, {\it $L^p$ bounds of solutions to reaction-diffusion equations}, Comm. Partial Differential Equations 4 (1979) 827-868. 

\bibitem{Cj04}L. Chen and A. J\"ungel, {\it Analysis of a multi-dimensional parabolic population model with strong cross diffusion}, SIAM J. Math. Anal. {\bf 36} (2004) 301-322.

\bibitem{Cj06}L. Chen and A. J\"ungel,
{\it Analysis of a parabolic cross-diffusion population model without self-diffusion},
J. Differential Equations 
{\bf  224} 
(2006) 39-59.

\bibitem{CDJ18} X. Chen, E.S. Daus, and A. J\"ungel, {\it Global existence analysis of cross-diffusion population systems for multiple species}, Arch Rational Mech Anal 227, 715-747 (2018). 


\bibitem{DGJ97}P. Degond, S. G\'enieys, and A. J\"ungel, {\it Symmetrization and entropy inequality for general diffusion equations}, C. R. Acad. Sci. Paris 325 (1997) 963-968.

\bibitem{fmt20}K. Fellner, J. Morgan, and B.Q. Tang, {\it Global classical solutions to quadratic systems with mass control in arbitrary dimensions}, Ann. Inst. H. Poincar\'e - Analyse non lin\'eaire 37 (2020) 181-307. 

\bibitem{fmt}K. Fellner, J. Morgan, and B.Q. Tang, {\it Uniform-in-time bounds for quadratic reaction-diffusion systems with mass dissipation in higher dimensions}, Discrete Conti. Dyn. Syst. S. (to appear)  

\bibitem{GGJ03a}G. Galiano, M.L. Garz, and A. J\"ungel, {\it Semi-discretization and numerical convergence of a nonlinear cross-diffusion population model} Numer. Math. 93 (2003) 655-673. 

\bibitem{GJV03}G. Galiano, A. J\"ungel, and J. Velasco, {\it A parabolic cross-diffusion system for granular materials}, SIAM J. Math. Anal. 35 (2003) 561-578.

\bibitem{henry}D. Henry, {\it Geometric Theory of Semilinear Parabolic Equations}, Lecture Notes in Math. 840, Springer Verlag, Berlin, 1981. 

\bibitem{J15}A. J\"ungel,
{\it The boundedness-by-entropy method for cross-diffusion systems}, Nonlinearity 28 (2015) 1963-2001.

\bibitem{kan90}J.I. Kanel, 
{\it Solvability in the large of a system of reaction-diffusion equaitons with the balance condition}, Differential Equations 26 (1990) 331-339. 

\bibitem{KS88} S. Kawashima and Y. Shuzita, {\it On the normal form of the symmetric hyperbolic-parabolic systems associated with the conservation laws}, Tohoku Math. J. II. 40 (1988) 449-464.

\bibitem{kw85}K. Kishimoto and H.F. Weinberger, {\it The spatial homogeneity of stationary stable eqiilibrium of some reaction-diffusion systems on convex domains}, J. Differential Equations 58 (1985) 15-21. 

\bibitem{LM17}T. Lepoutre and A. Moussa, {\it 
Entropic structure and duality for multiple species cross-diffusion systems},
Nonlinear Analysis,
159 
(2017) 298-315.

\bibitem{ls13} E. Latos and T. Suzuki, {\it Global dynamics of a reaction-diffusion system with mass conservation}, J. Math. Anal. Appl. 411 (2014) 107-118. 

\bibitem{lms15}E. Latos, Y. Morita, and T. Suzuki, {\it Global dynamics and spectrum comparison of a reaction-diffusion system with mass conservation}, J. Dyn. Diff. Equat. 30 (2018) 828-844.  

\bibitem{lsy12}E. Latos, T. Suzuki, and Y. Yamada, {\it Transient and asymptotic dynamics of a prey-predator system with diffusion}, Math. Meth. Appl. Sci. 35 (2012) 1101-1109. 

\bibitem{LNi}Y. Lou and W.-M. Ni, {\it Diffusion, self-diffusion and cross-diffusion}, J. Differential Equations 131 (1996) 79-131.

\bibitem{lny04}Y. Lou, W.-M. Ni, and S. Yotsunati, {\it On a limiting system in the Lotka-Volterra competition with cross-diffusion diffusion}, Discrete Contin. Dyn. Syst. 10 (2004) 435-458. 

\bibitem{lny15}Y. Lou, W.-M. Ni, and S. Yotsunati, {\it Pattern formation in a cross-diffusion system}, Discrete Contin. Dyn. Syst. 35 (2015) 1589-1607. 

\bibitem{mm83}H. Matano and M. Mimura, {\it Pattern formation in competition-diffusion systems in nonconvex domains}, Publ. Res. Inst. Math. Sic. Kyoto Univ. 19 (1983) 1049-1079. 

\bibitem{msy18}T. Mori, T. Suzuki, and S. Yotsutani, {\it Numerical approach to existence and stability of sationary solutions to a SKT cross-diffusion equation}, Mathematical Models and Methods in Applied Science 28 (2018) 2191-2210. 

\bibitem{okubo}A. Okubo, {\it Diffusion and Ecological Problems: Mathematical Models}, Springer Verlag, 1980. 

\bibitem{pie10}M. Pierre, {\it Global existence in reaction-diffusion systems with control of mass: a survey}, Milan J. Math. 78 (2010) 417-455. 

\bibitem{psy19}M. Pierre, T. Suzuki, and Y. Yamada, {\it Dissipative reaction diffusion systems with quadratic growth}, Indiana Univ. Math. J. 68 (2019) 291-322. 

\bibitem{rothe}F. Rothe, {\it Global Solutions of Reaction-Diffusion Systems}, Lecture Notes in Math. 1072 ,  Springer Verlag, Berlin, 1984. 

\bibitem{SKT}N. Shigesada, K. Kawasaki, E. Teramoto, {\it Spatial segregation of interacting species}, J. Theoret. Biol. 79 (1979) 83-99.

\bibitem{sou18}P. Souplet, {\it Global existence for reaction-diffusion systems with dissipation of mass and quadratic growth}, J. Evolution Equations 18 (2018) 1713-1720. 

\bibitem{suzuki05}T. Suzuki, {\it Free Energy and Self-Interacting Particles}, Birkhauser, Boston, 2005. 

\bibitem{suzuki15}T. Suzuki, {\it Mean Field Theories and Dual Variation - Mathematical Structures of the Mesoscopic Model}, second edition, Atlantis Press, Paris, 2015. 

\bibitem{suzuki18}T. Suzuki, {\it Chemotaxis, Reaction, Network, Mathematics for Self-Organization}, World Scientific, Singapore, 2018. 

\bibitem{ss11}T. Suzuki and T. Senba, {\it Applied Analysis, Mathematical Methods in Natural Science}, Imperial College Press, London, 2011. 

\bibitem{sy15}T. Suzuki and Y. Yamada, {\it Global-in-time behavior of Lotka-Volterra system with diffusion  - skew symmetric case}, Indiana Univ. Math. J. 64 (2015) 181-216. 

\bibitem{tur52}A.M. Turing, {\it The chemical basis of morphogenesis}, Philosophical Transactions of the Royal Society of London B 237 (1952) 37-72.

\bibitem{yag08}A. Yagi, {\it Exponential attractors for competing spaces model with cross-diffusion}, Discret Contin. Dyn. Syst. 22 (2008) 1091-1120. 

\end{thebibliography}
\end{document}